\documentclass[a4paper,11pt,reqno]{amsart}

\usepackage[utf8]{inputenc}
\usepackage[T1]{fontenc}

\usepackage{amssymb,mathptmx}
\usepackage{microtype,cite}

\usepackage[colorlinks,linkcolor=blue,citecolor=blue,urlcolor=blue]{hyperref}
\theoremstyle{plain}
\newtheorem{thm}{Theorem}

\newtheorem{proposition}[thm]{Proposition}
\newtheorem{corollary}[thm]{Corollary}

\theoremstyle{definition}
\newtheorem{definition}[thm]{Definition}

\theoremstyle{remark}
\newtheorem{remark}[thm]{Remark}
\newtheorem{example}[thm]{Example}

\newcommand{\FF}{\mathcal{F}}

\newcommand{\NN}{\mathbb{N}}

\newcommand{\QQ}{\mathbb{Q}}
\newcommand{\RR}{\mathbb{R}}

\DeclareMathOperator{\EE}{\mathbb{E}}

\newcommand{\deq}{\mathrel{\mathop:}=}

\newcommand*{\one}{\text{\usefont{U}{bbold}{m}{n}1}}

\newcommand{\abs}[1]{\left|{#1}\right|}
\newcommand{\floor}[1]{\lfloor{#1}\rfloor}

\newcommand{\iid}{i.{\kern1pt}i.{\kern1pt}d.}
\newcommand{\df}{\ensuremath{d\mkern-1mu f}}

\renewcommand{\,}{\ifmmode\mkern2mu\else\thinspace\fi}
\renewcommand{\mid}{\ensuremath{\mkern2mu|\mkern2mu}}

\begin{document}

\title[On the normality of the concatenated Fibonacci constant]{On the normality of the concatenated \\ Fibonacci constant}

\author[J. R. G. Mendon\c{c}a]{Jos\'{e} Ricardo G. Mendon\c{c}a}
\address{Universidade de S\~{a}o Paulo, S\~{a}o Paulo, SP, Brasil}
\email{jricardo@usp.br}


\begin{abstract}
We study the concatenated Fibonacci constant $\mathcal{F} \deq 0.F_{1}F_{2}F_{3}\cdots = 0.11235813\cdots$, obtained by concatenating the Fibonacci numbers in the fractional part, and ask whether it is normal. We show that several classical sufficient conditions for normality by concatenation do not apply to the Fibonacci sequence because of its exponential growth, while a criterion of Pollack and Vandehey implies that the normality of $\mathcal{F}$ in base $10$ would follow if almost all Fibonacci numbers were $(\varepsilon,k)$-normal in base $10$. The Benford bias of leading digits and the Pisano periodicity of trailing digits are shown to contribute asymptotically negligible fractions of the total digits, isolating the distribution of the deep digits of large Fibonacci numbers as the remaining obstruction. Large-scale numerical experiments on the first $500{,}000$ Fibonacci numbers in bases $10$ and $2$ indicate that global single-digit counts and $k$-block statistics for $k = 2, 3, 4$ are compatible with \iid-like fluctuations at the scales tested, and that a positional decomposition concentrates the visible structured deviation at the boundaries between consecutive Fibonacci numbers, while pooled interior blocks remain close to uniform. Our computations suggest that any obstruction to normality lies in the asymptotic behavior of the deep digits of $F_{n}$.
\end{abstract}

\subjclass[2020]{11K16, 11B39, 11J71}

\keywords{Normal numbers, digit distribution, Koksma--Hlawka discrepancy}

\maketitle


\section{Introduction}
\label{sec:intro}

An irrational number $\alpha \in \RR \setminus \QQ$ is normal in the integer base $b\geq 2$ if, for every ${k \in \NN}$, every finite string $s \in \{0,1,\dots,b-1\}^{k}$ occurs in the base-$b$ expansion of $\alpha$ with asymptotic frequency $b^{-k}$. If this condition holds for $k = 1$, the number is said to be simply normal in base $b$; a number that is normal in every base is called absolutely normal. Borel proved in 1909 that almost every real number is absolutely normal \cite{borel}, yet proving that a given number is normal remains notoriously difficult. Well-known constructive examples of decimal normal numbers are Champernowne's constant $0.123456\cdots$ \cite{champernowne}, the Besicovitch number $0.149162536\cdots$ \cite{besicovitch}, and the Copeland--Erd\H{o}s constant $0.23571113\cdots$ \cite{copeland-erdos}, all obtained by concatenating sequences of integers whose counting functions grow sufficiently fast. While similar artificial numbers can be constructed in other bases \cite{stoneham,levin,bailey-crandall}, numbers like $\pi$ and Ap\'ery's constant $\zeta(3)$ have not yet been proved normal in any base.

In this paper we study the concatenated Fibonacci constant, henceforth referred to simply as the Fibonacci constant,
\begin{equation}
\FF = 0.F_{1}F_{2}F_{3}\cdots = 0.11235813213455\cdots,
\end{equation}
where $F_{1} = 1$, $F_{2} = 1$, and $F_{n} = F_{n-1}+F_{n-2}$ for $n \geq 3$, and ask whether $\FF$ is normal in base $10$. Among exponentially-growing sequences, the powers of two $2, 4, 8, 16, \ldots$ give a simpler and better-known concatenation problem---the normality of $0.2\,4\,8\,16\,32\cdots$, and more generally of the base-$b$ concatenation of $a^{n}$ for a fixed integer $a \geq 2$, is itself open and beyond the classical criteria of Section~\ref{sec:classical}. We nonetheless take the Fibonacci sequence as our primary test case: it is the linear recurrence with the smallest nontrivial growth rate $\phi = \frac{1}{2}(1+\sqrt{5})$, its arithmetic is unusually rich (through Pisano periods, divisibility properties, and Carmichael's theorem on primitive prime divisors), and it is familiar enough that results about it can be stated and calibrated without specialized preliminaries. At the same time, its exponential growth places $\FF$ just beyond the reach of the classical Davenport--Erd\H{o}s machinery, which requires the $n$-th term of the sequence to have at most $O(n^{1-\delta})$ digits for some $\delta > 0$. This makes the Fibonacci constant a natural test case beyond the polynomial-growth regime covered by the classical theory.

This study has two complementary goals. On the theoretical side, we identify a precise sufficient route to normality and show that the most visible sources of digit bias, the Benford distribution of leading digits and the Pisano periodicity of trailing digits, are asymptotically negligible. This isolates the deep digits of large Fibonacci numbers as the remaining obstruction. On the computational side, we test this structural picture by means of relatively large-scale numerical experiments in bases $10$ and $2$. The resulting data show that the observable deviations are concentrated at the boundaries between consecutive Fibonacci numbers and provide no evidence of a persistent bias in the pooled interior digits.

This paper is organized as follows. In Section~\ref{sec:classical}, we review the classical sufficient conditions for normality by concatenation and show how each fails for the Fibonacci constant. In Section~\ref{sec:reduction}, we apply a criterion of Pollack and Vandehey to identify the $(\varepsilon,k)$-normality of almost all Fibonacci numbers as a sufficient condition for the normality of $\FF$, and explain that this condition is not an equivalence. Section~\ref{sec:structural} analyzes the structural sources of digit bias in $\FF$ and shows that both the Benford distribution of leading digits and the Pisano periodicity of trailing digits contribute an asymptotically negligible fraction of the total digits, while the distribution of the deep digits of large Fibonacci numbers (the obstacle to applying the aforementioned sufficient route) is argued to lie beyond current equidistribution techniques. We also discuss in this section the complementary row-versus-column perspective arising from recent work of Benfield and Manes and explain why the argument does not settle the concatenation question. Sections~\ref{sec:experiments} and~\ref{sec:base2} report numerical experiments in bases $10$ and $2$, respectively, on the first $500{,}000$ Fibonacci numbers, together with a positional decomposition separating interior from boundary blocks. The statistical tests presented are diagnostic rather than formal; see Remark~\ref{rem:stats}. Finally, Section~\ref{sec:concluding} contains concluding remarks and directions for further work.


\section{Classical sufficient conditions for normality by concatenation}
\label{sec:classical}

The following proposition establishes the counting function for the Fibonacci sequence, a simple result that is otherwise difficult to find in the standard literature.

\begin{proposition}
\label{prop:count}
The number of Fibonacci numbers up to $N \in \NN$, counted with multiplicity---that is, the number of indices $n \geq 1$ with $F_{n} \leq N$, so that the equal terms $F_{1} = F_{2} = 1$ are counted twice---is given by
\begin{equation}
\label{eq:phib}
\Phi_{F}(N) = \bigl\lfloor \log[\sqrt{5}(N+\tfrac{1}{2})]/\log{\phi} \bigr\rfloor,
\end{equation}
where $\phi = \frac{1}{2}(1+\sqrt{5})$ is the golden ratio.
\end{proposition}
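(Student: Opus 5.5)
The plan is to reduce the condition $F_{n} \leq N$, index by index, to an inequality on $\phi^{n}$ alone, using Binet's formula. With $\psi = \frac{1}{2}(1-\sqrt{5}) = -1/\phi$ we have $F_{n} = (\phi^{n}-\psi^{n})/\sqrt{5}$. For every $n \geq 1$ we have $|\psi^{n}|/\sqrt{5} \leq |\psi|/\sqrt{5} < 1/2$, so $F_{n}$ is the integer nearest to $\phi^{n}/\sqrt{5}$, and moreover $\abs{F_{n} - \phi^{n}/\sqrt{5}} < 1/2$ strictly. The argument works one index at a time, so it does not need monotonicity of $(F_{n})$. In particular it handles the repeated value $F_{1} = F_{2} = 1$ automatically.

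The key step is the equivalence, for each $n \geq 1$ and each integer $N \geq 0$,
\begin{equation*}
F_{n} \leq N \iff \phi^{n}/\sqrt{5} < N + \tfrac{1}{2}.
\end{equation*}
Suppose first that $F_{n} \leq N$. Then $\phi^{n}/\sqrt{5} < F_{n} + \frac{1}{2} \leq N + \frac{1}{2}$. Conversely, suppose $\phi^{n}/\sqrt{5} < N + \frac{1}{2}$. Then $F_{n} < \phi^{n}/\sqrt{5} + \frac{1}{2} < N+1$, and since $F_{n}$ is an integer this gives $F_{n} \leq N$.

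Setting $x \deq \log[\sqrt{5}(N+\frac{1}{2})]/\log\phi$, the equivalence says that $\Phi_{F}(N)$ is the number of integers $n \geq 1$ with $n < x$. Note that $x > 0$ for every $N \geq 0$, since $\sqrt{5}/2 > 1$. The number of integers $n \geq 1$ with $n < x$ equals $\lceil x \rceil - 1$. This agrees with $\floor{x}$ exactly when $x \notin \NN$.

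The only delicate point, and the one I expect to need care, is ruling out that $x$ is an integer. Suppose $x = n \in \NN$. Then $\phi^{n} = \sqrt{5}(N+\frac{1}{2})$ would be a rational multiple of $\sqrt{5}$. However, $\phi^{n} = \frac{1}{2}(L_{n} + F_{n}\sqrt{5})$ with the Lucas number $L_{n} \geq 1$. Since $1$ and $\sqrt{5}$ are linearly independent over $\QQ$, the rational part $L_{n}/2$ would have to vanish, which is a contradiction. Hence $x \notin \NN$, so $\Phi_{F}(N) = \floor{x}$, which is \eqref{eq:phib}.

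As a sanity check on small cases: for $N = 0$ we get $x \approx 0.23$, giving count $0$. For $N = 1$ we get $x \approx 2.44$, giving count $2$, which reflects the double count of $F_{1} = F_{2} = 1$.
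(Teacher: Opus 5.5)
Your proof is correct and follows essentially the same route as the paper. Binet's rounding turns $F_n \le N$ into $n < \log_\phi[\sqrt5(N+\frac12)]$, and integrality of the right-hand side is excluded by comparing rational parts in $\QQ(\sqrt5)$: your $L_n/2$ is exactly the paper's $(F_m + 2F_{m-1})/2$. One trivial slip in your sanity check: for $N=1$ you get $x \approx 2.51$, not $2.44$, though the count of $2$ is unchanged.
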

\begin{proof}
The Binet formula gives $F_{n} = \lfloor \phi^{n}/\sqrt{5} + \tfrac{1}{2} \rfloor$ for $n \geq 1$, and since $\lfloor x \rfloor \leq N$ if and only if $x < N+1$,
\begin{equation}
F_{n} \leq N \iff n < \log_{\phi}[\sqrt{5}\,(N+\tfrac{1}{2})].
\end{equation}
The right-hand side above cannot be an integer. Indeed, if $\sqrt{5}\,(N+\tfrac{1}{2}) = \phi^{m}$ for some integer $m \geq 1$, then the identity $\phi^{m} = F_{m}\phi + F_{m-1}$ gives $F_{m} + 2F_{m-1} + F_{m}\sqrt{5} = (2N+1)\sqrt{5}$, which is impossible for $m \geq 1$ since $F_{m} \geq 1$ and $F_{m-1} \geq 0$. Therefore, $\Phi_{F}(N) = \sum_{n \geq 1} \one[F_{n} \leq N]$ is exactly the number of integers $n \geq 1$ satisfying ${n <\log_{\phi}[\sqrt{5}\,(N+\tfrac{1}{2})]}$, and we get~\eqref{eq:phib}.
\end{proof}
\noindent This simple result shows that the Fibonacci sequence is exponentially sparse ($\Phi_{F}(N) \sim \log N$) among the positive integers.

\subsection{The Copeland--Erd\H{o}s barrier}
\label{subsec:copeland}

The classical theorem of Copeland and Erd\H{o}s \cite{copeland-erdos} asserts that the concatenation of an increasing sequence $a_{1} < a_{2} < \cdots$ is normal in base $b$ provided the counting function $\#\{a_{i} \leq N\}$ exceeds $N^{\theta}$ for every $\theta < 1$ and all sufficiently large $N$. Since $\log{N}/N^{\theta} \to 0$ for any $\theta > 0$, Proposition~\ref{prop:count} shows that the Fibonacci sequence fails this condition by a large margin.

The converse of the Copeland--Erd\H{o}s theorem is clearly false: one can regroup the digits of the Copeland--Erd\H{o}s constant $0.23571113\cdots$ into blocks $a_{1} = 2$, $a_{2} = 35$, $a_{3} = 711$, \ldots, where $a_{n}$ has exactly $n$ digits. The resulting sequence has counting function $\Phi(N) \sim \log_{10}{N}$, yet its concatenation produces a normal number. The Copeland--Erd\H{o}s theorem can therefore neither prove nor disprove the normality of~$\FF$.


\subsection{The Davenport--Erd\H{o}s approach and extensions}
\label{subsec:davenport}

A natural attempt to circumvent the sparsity barrier is to appeal to the stronger theorem of Davenport and Erd\H{o}s \cite{davenport-erdos}, which shows that $0.p(1)p(2)p(3)\cdots$ is normal in base $10$ for any polynomial $p$ taking positive integer values at positive integer arguments. This was subsequently generalized by Nakai and Shiokawa \cite{nakai-shiokawa-1992,nakai-shiokawa-1990} to functions of the form $f(x) = \alpha_{n}x^{\beta_{n}} + \cdots + \alpha_{1}x^{\beta_{1}}$ with $\beta_{n} > \cdots > \beta_{1} \geq 0$ and $f(x) > 0$ for $x > 0$.

These results are proved using Weyl's estimates for exponential sums involving polynomials or generalized polynomials. One might hope to bring the Fibonacci sequence into this framework by finding a polynomial $p$ with $p(k) = F_{k}$ for all $k \geq 1$. However, no such polynomial exists: any polynomial of degree $d$ satisfies $p(k) = O(k^{d})$, while $F_{k} = \Theta(\phi^{k})$ grows exponentially. Lagrange interpolation produces a polynomial of degree $n-1$ through the points $(1,F_{1}), \ldots, (n,F_{n})$, but the Davenport--Erd\H{o}s theorem requires a {fixed} polynomial, and its proof mechanism breaks down completely when the degree grows with the number of terms.

The Nakai--Shiokawa generalization extends the class of admissible functions considerably, but all functions in their framework have at most generalized-polynomial growth. Exponential functions like $F_{n} \sim \phi^{n}/\sqrt{5}$ remain out of reach.

More recent work by Clanin and Rayman \cite{clanin-rayman} studies the Davenport--Erd\H{o}s and Nakai--Shiokawa theorems through the lens of finite-state dimension, according to which a sequence is normal if and only if its finite-state dimension equals $1$. Their results show that rational linear polynomials preserve finite-state dimension of Copeland--Erd\H{o}s sequences, while polynomials of degree $\geq 2$ can change it. The results are elegant, but cannot resolve the normality of $\FF$, since the Fibonacci sequence is not obtained by applying any polynomial to a set of integers.


\subsection{The $\varphi$--$\sigma$--$\lambda$ approach}
\label{subsec:phi-sig-lbd}

Pollack and Vandehey showed that if $f$ is any function formed by composing Euler's totient function $\varphi$, the sum-of-divisors function $\sigma$, or Carmichael's lambda-function $\lambda$, then the number $0.f(1)f(2)f(3)\dots$ obtained by concatenating the base $b$ digits of successive $f$-values is $b$-normal \cite{phi-sigma-lambda}. 

That the approach cannot help can be seen from a simple growth argument. Since $\varphi(n) \leq n$, $\lambda(n) \leq n$, and $\sigma(n) = O(n\log\log{n})$, any finite composition $f = f_{1} \circ f_{2} \circ \cdots \circ f_{k}$ of them grows at most like $n$ times a polylogarithmic factor, so that $\log f(n) = O(\log n)$, which is precisely the weakly-polynomial-growth hypothesis that underlies the Pollack--Vandehey approach \cite[condition~(1.3) and Theorem~1.1]{phi-sigma-lambda}. Since $\log F_{n} \sim n\log{\phi}$, the Fibonacci numbers violate this bound by an exponential margin. Hence $n \mapsto F_{n}$ lies entirely outside the class of functions to which \cite{phi-sigma-lambda} applies, and no composition of $\varphi$, $\sigma$, $\lambda$ can agree with it even eventually. 

The obstruction is in fact already visible at small values: for example, $F_{5} = 5$ is not in the range of $\varphi$, $\sigma$, or $\lambda$ (no odd number $> 1$ is a totient or a Carmichael value, and $5$ is not a sum of divisors), so the Fibonacci sequence cannot be reproduced as $f(1), f(2), \ldots$ for any composition $f$ of these functions, independent of the growth obstruction above.


\section{A sufficient condition via digit normality of Fibonacci numbers}
\label{sec:reduction}

In this section we state a sufficient condition for the normality of $\FF$ in terms of the digit statistics of the Fibonacci numbers.

\begin{definition}
A natural number $n$ is $(\varepsilon,k)$-normal in base $b$ if
\begin{equation}
\abs{\frac{N(w,\sigma_{b}(n))}{\abs{\sigma_{b}(n)}}-b^{-k}} \leq \varepsilon
\end{equation}
for every string $w \in \{0, 1, \dots, b-1\}^{k}$, where $\sigma_{b}(n)$ denotes the base-$b$ representation of $n$ and $N(w,s)$ counts the number of sliding-block occurrences of $w$ in the string $s$.
\end{definition}

The notion of $(\varepsilon,k)$-normality goes back to Besicovitch \cite{besicovitch}, and the density-bound that underlies the criterion is due to Copeland and Erd\H{o}s \cite{copeland-erdos}. The following reformulation as a three-part sufficient condition for concatenations of arithmetic-function values is due to Pollack and Vandehey \cite{pollack-vandehey,phi-sigma-lambda}, with closely related variants appearing in \cite{normality-criterion,clanin-rayman}.

\begin{thm}[Normality criterion for concatenations]
\label{thm:criterion}
Let $a_{1}, a_{2}, a_{3}, \ldots$ be a sequence of positive integers. Suppose that as $m \to \infty$ the following conditions hold:
\begin{enumerate}
\item[($i$)] The digit lengths grow on average: $m = o\Bigl(\sum\limits_{n=1}^{m}\abs{\sigma_{b}(a_{n})}\Bigr)$.
\item[($ii$)] No single length dominates: $m \cdot\! \max\limits_{1 \leq n \leq m}\abs{\sigma_{b}(a_{n})} = O\Bigl(\sum\limits_{n=1}^{m}\abs{\sigma_{b}(a_{n})}\Bigr)$.
\item[($iii$)] For every $\varepsilon > 0$ and $k \in \NN$, the number of $n \leq m$ for which $a_{n}$ is not $(\varepsilon,k)$-normal is $o(m)$.
\end{enumerate}
Then $0.a_{1}a_{2}a_{3}\cdots$ is normal in base $b$.
\end{thm}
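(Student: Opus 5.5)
The plan is to verify normality directly from the definition: fix $k\in\NN$ and a block $w\in\{0,\dots,b-1\}^{k}$, and show that the number of occurrences of $w$ among the first $L$ digits of $x=0.a_{1}a_{2}a_{3}\cdots$ is $b^{-k}L+o(L)$ as $L\to\infty$. Write $\ell_{n}=\abs{\sigma_{b}(a_{n})}$ and $S_{m}=\sum_{n\le m}\ell_{n}$. I will first handle prefixes ending exactly at a block boundary, $L=S_{m}$, and then interpolate. In the first $S_{m}$ digits, every occurrence of $w$ either lies entirely inside a single word $\sigma_{b}(a_{n})$ with $n\le m$, or straddles a boundary between consecutive words. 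Each boundary is crossed by at most $k-1$ windows of length $k$, so the straddling occurrences number at most $(k-1)m$, which is $o(S_{m})$ by condition~($i$).

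Next I count the interior occurrences, which total $\sum_{n\le m}N(w,\sigma_{b}(a_{n}))$. Fix $\varepsilon>0$ and split the indices $n\le m$ into the good set $G$, where $a_{n}$ is $(\varepsilon,k)$-normal, and the bad set $B$. For $n\in G$, the definition gives $\abs{N(w,\sigma_{b}(a_{n}))-b^{-k}\ell_{n}}\le\varepsilon\ell_{n}$, so summing over $G$ contributes at most $\varepsilon S_{m}$ of error. For $n\in B$, I use only the trivial bound $N(w,\sigma_{b}(a_{n}))\le\ell_{n}$, so the total error from $B$ is at most $2\sum_{n\in B}\ell_{n}\le 2\abs{B}\max_{n\le m}\ell_{n}$. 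This is the one place where condition~($ii$) enters: together with~($iii$), it gives
\begin{equation}
\abs{B}\max_{n\le m}\ell_{n}=\frac{\abs{B}}{m}\cdot m\max_{n\le m}\ell_{n}=o(1)\cdot O(S_{m})=o(S_{m}).
\end{equation}
The total count of $w$ in the first $S_{m}$ digits is therefore $b^{-k}S_{m}+O(\varepsilon S_{m})+o(S_{m})$, and letting $\varepsilon\to0$ settles the boundary prefixes.

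For a general $L$, choose $m$ with $S_{m}\le L<S_{m+1}$. The count for length $L$ differs from the count for $S_{m}$ by at most $\ell_{m+1}$. So it suffices that $\ell_{m+1}=o(S_{m})$, and this is the step I expect to be the main obstacle, since the hypotheses are stated for the maxima up to $m$ and not for $m+1$. Condition~($ii$) applied at $m+1$ gives $\ell_{m+1}\le\max_{n\le m+1}\ell_{n}=O(S_{m+1}/(m+1))$. Writing $S_{m+1}=S_{m}+\ell_{m+1}$, this says $\ell_{m+1}\le C(S_{m}+\ell_{m+1})/(m+1)$. For $m+1>2C$ we can rearrange to $\ell_{m+1}\le 2CS_{m}/(m+1)=o(S_{m})$. (Note that~($ii$) forces every individual length to be $O(1/m)$ of the total, so even a single exceptional word is harmless in the limit.) This completes the argument, with $k$ and $w$ arbitrary. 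Condition~($i$) is only needed to kill the boundary term $(k-1)m$. Observe also that the argument uses only upper bounds on the bad contributions, so the criterion is only sufficient, which is consistent with the remark in the introduction that it is not an equivalence.
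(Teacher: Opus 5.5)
Your proof is correct. Condition~(i) disposes of the at most $(k-1)m$ windows straddling word boundaries, (ii) combined with (iii) makes the non-$(\varepsilon,k)$-normal words contribute only $o(S_m)$ digits, and (ii) applied at $m+1$ gives $\ell_{m+1}=o(S_m)$ for the interpolation. The paper does not prove Theorem~\ref{thm:criterion} itself; it quotes the result from Pollack and Vandehey \cite{pollack-vandehey,phi-sigma-lambda}, whose proof is essentially this counting argument (irrationality, which the paper's definition of normality presupposes, is automatic, since an eventually periodic expansion cannot give every long block positive frequency).
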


We prove that the Fibonacci sequence verifies conditions (i) and (ii) above.

\begin{proposition}
\label{prop:conditions}
The Fibonacci sequence $F_{1}, F_{2}, F_{3}, \ldots$ satisfies conditions~(i) and~(ii) of Theorem~\ref{thm:criterion} in base $b=10$.
\end{proposition}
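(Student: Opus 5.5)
We plan to work directly with the digit-length function $L_{n} \deq \abs{\sigma_{10}(F_{n})} = \floor{\log_{10} F_{n}} + 1$. The first step is a two-sided linear estimate for $L_{n}$. From the Binet formula $F_{n} = \lfloor \phi^{n}/\sqrt{5} + \tfrac12 \rfloor$, used in the proof of Proposition~\ref{prop:count}, we get $\phi^{n-2} \leq F_{n} \leq \phi^{n-1}$ for all $n \geq 1$. This also follows by a trivial induction from the recurrence. Hence
\begin{equation*}
(n-2)\log_{10}\phi < L_{n} \leq (n-1)\log_{10}\phi + 1,
\end{equation*}
so $L_{n} = n\log_{10}\phi + O(1)$, with an explicit constant $c \deq \log_{10}\phi \approx 0.209$.

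Next we sum this estimate. Writing $S_{m} \deq \sum_{n=1}^{m} L_{n}$, we obtain
\begin{equation*}
S_{m} = \tfrac{c}{2}\,m^{2} + O(m).
\end{equation*}
If one prefers to avoid the $O(1)$ bookkeeping for small $n$, it is enough to use the cruder lower bound $S_{m} \geq \sum_{n \geq m/2}^{m} L_{n} \geq \tfrac{m}{2}\bigl((\tfrac{m}{2}-2)c\bigr)$. Either way $S_{m} \gg m^{2}$, and therefore $m/S_{m} = O(1/m) \to 0$. This is condition~(i).

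For condition~(ii), the Fibonacci sequence is nondecreasing, so $\max_{n \leq m} L_{n} = L_{m} \leq (m-1)c + 1$. Consequently
\begin{equation*}
m \cdot \max_{n\leq m} L_{n} \leq c\,m^{2} + m,
\end{equation*}
while $S_{m} \geq \tfrac{c}{2}m^{2} - O(m)$. The ratio is therefore bounded by $2 + o(1)$, so $m\max_{n \leq m} L_{n} = O(S_{m})$.

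There is no real obstacle here. The only care needed is in getting clean elementary bounds $\phi^{n-2} \leq F_{n} \leq \phi^{n-1}$ for all $n$, rather than merely asymptotic ones, so that the $O(\cdot)$ statements hold uniformly. We will verify these bounds by induction: they hold for $n = 1, 2$, and $\phi^{n-1} + \phi^{n-2} = \phi^{n}$ propagates them. Finally, we will remark that the same argument works verbatim in any base $b$, with $c = \log_{b}\phi$.
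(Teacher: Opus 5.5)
Your proof is correct and follows essentially the same route as the paper: show $L_n = n\log_{10}\phi + O(1)$, deduce $S_m \sim \tfrac12 m^2\log_{10}\phi$ and $m\,L_m \sim m^2\log_{10}\phi$, and compare. Your explicit bounds $\phi^{n-2}\le F_n\le\phi^{n-1}$ make the paper's asymptotic digit-count step hold uniformly in $n$, which is a useful tightening: the paper's displayed floor formula, obtained from $F_n\sim\phi^n/\sqrt5$, actually returns $0$ digits at $n=1$.
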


\begin{proof}
Since $F_{n} \sim \phi^{n}/\sqrt{5}$, the number of base-$10$ digits of $F_{n}$ is
\begin{equation}
\abs{\sigma_{10}(F_{n})} = \floor{n \log_{10}{\phi} - \log_{10}{\sqrt{5}}} + 1 \sim n\log_{10}{\phi}
\end{equation}
for large $n$, where $\log_{10}{\phi} \approx 0.20898$.

\noindent Condition~(i): $\sum\limits_{n=1}^{m}\abs{\sigma_{10}(F_{n})} \sim \frac{1}{2}m^{2}\log_{10}{\phi}$, so $m = o(\frac{1}{2}m^{2}\log_{10}{\phi})$.

\noindent Condition~(ii): $m \cdot\! \max\limits_{1 \leq n \leq m}\abs{\sigma_{10}(F_{n})} = m\abs{\sigma_{10}(F_{m})} \sim m^{2}\log_{10}{\phi} = O(\frac{1}{2}m^{2}\log_{10}{\phi})$.
\end{proof}

By Theorem~\ref{thm:criterion}, the normality of $\FF$ is therefore implied by condition~(iii), i.e., that almost all Fibonacci numbers are $(\varepsilon,k)$-normal. We state this explicitly.

\begin{corollary}
\label{cor:reduction}
The Fibonacci constant $\FF$ is normal in base $10$ if, for every $\varepsilon > 0$ and $k \in \NN$,
\begin{equation}
\label{eq:condition}
\#\{n \leq m \colon F_{n} \text{ is not } (\varepsilon,k)\text{-normal in base } 10\} = o(m) \text{ as } m \to \infty.
\end{equation}
\end{corollary}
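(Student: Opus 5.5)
The corollary follows by applying Theorem~\ref{thm:criterion} to the sequence $a_{n} = F_{n}$ with $b = 10$. The plan is to check the three hypotheses in turn. Conditions~(i) and~(ii) are already supplied by Proposition~\ref{prop:conditions}, so no further work is needed there. Condition~(iii) is, word for word, the hypothesis~\eqref{eq:condition}: for every $\varepsilon>0$ and $k\in\NN$, the number of indices $n\le m$ for which $F_{n}$ fails to be $(\varepsilon,k)$-normal in base $10$ is $o(m)$. Once all three conditions hold, Theorem~\ref{thm:criterion} yields that $0.F_{1}F_{2}F_{3}\cdots = \FF$ is normal in base $10$.

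Two bookkeeping points need care. First, the concatenation in Theorem~\ref{thm:criterion} is taken over the sequence as indexed, so the repeated term $F_{1}=F_{2}=1$ is written twice. This matches the definition of $\FF = 0.1123\cdots$, and the counting in~\eqref{eq:condition} is over indices $n\le m$ rather than over distinct values, consistent with the theorem. Second, Theorem~\ref{thm:criterion} does not require the sequence to be strictly increasing, so the single repetition causes no issue. In any case a finite number of initial terms affects neither the hypotheses (all are asymptotic in $m$) nor the normality of the resulting number, since changing finitely many digits changes no limiting frequency.

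The only step with substance is the verification of~(i) and~(ii), and that is exactly Proposition~\ref{prop:conditions}, which rests on the digit-length formula $\abs{\sigma_{10}(F_{n})}\sim n\log_{10}\phi$. If one wanted to make~(ii) fully rigorous rather than asymptotic, the sole point to watch is that $\max_{n\le m}\abs{\sigma_{10}(F_{n})} = \abs{\sigma_{10}(F_{m})}$. This holds by the monotonicity $F_{n}\le F_{n+1}$, and the quotient $m\abs{\sigma_{10}(F_{m})}\big/\sum_{n\le m}\abs{\sigma_{10}(F_{n})}$ then tends to $2$, hence is bounded. Beyond this, the argument is a direct invocation of the theorem, and I expect no genuine obstacle.
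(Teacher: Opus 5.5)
Your proposal is correct and is the paper's own argument: the corollary is Theorem~\ref{thm:criterion} applied to $a_{n}=F_{n}$ in base $10$, with conditions~(i) and~(ii) supplied by Proposition~\ref{prop:conditions} and condition~(iii) being the hypothesis itself. Your extra bookkeeping is sound but goes beyond what the paper spells out: the repeated term $F_{1}=F_{2}$ is harmless because the theorem is stated for arbitrary sequences of positive integers, and the ratio in~(ii) tends to $2$.
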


Condition~\eqref{eq:condition} is a statement about the digit distribution within individual Fibonacci numbers. Since $F_{n} \sim \phi^{n}/\sqrt{5}$, it is closely related to the question of whether the base-$10$ digits of $\phi^{n}$ are equidistributed for most $n$---though not identical to it, since condition~\eqref{eq:condition} concerns block statistics inside the finite integer strings $F_{n}$ and is sensitive to the rounding $F_{n} = \floor{\phi^{n}/\sqrt{5} + \tfrac{1}{2}}$ and to finite-length effects. It belongs to the same family as the open problem of whether the base-$b$ digits of $2^{n}$, or more generally $\alpha^{n}$ for fixed algebraic $\alpha > 1$, are asymptotically equidistributed for most $n$ \cite{bugeaud,kuipers-niederreiter}.


\section{Structural analysis of the digit distribution}
\label{sec:structural}

We show that two possible sources of digit bias in $\FF$, the Benford distribution of leading digits and the Pisano periodicity of trailing digits, are both negligible.

Before turning to the specific bases used in our numerical experiments, however, it is worth noting that the two structural mechanisms studied in this section have a general origin valid in every integer base $b \ge 2$. On the one hand, the leading digits of $F_n$ are governed by the fractional parts of $n\log_{b}{\phi} - \log_{b}{\sqrt{5}}$ and hence, by Weyl's equidistribution theorem, exhibit Benford-type behavior. On the other hand, the trailing $k$ digits are determined by $F_{n} \bmod b^{k}$ and are therefore controlled by the Pisano periodicity modulo powers of the base. Thus the boundaries between consecutive Fibonacci numbers carry deterministic structure in every base, although the precise form and strength of the resulting bias depend on the base and on the digit position.

\subsection{The leading digits}
Since $\log_{10}{\phi}$ is irrational, Weyl's equidistribution theorem guarantees that the fractional parts $\{n\log_{10}{\phi} + c\}$ (where $c = -\frac{1}{2}\log_{10}5$) are equidistributed modulo $1$. This implies that the leading digit of $F_{n}$ follows Benford's law, according to which digit $d$ appears as the first digit with frequency $\log_{10}(1+1/d)$; digit $1$, for instance, should appear $\sim 30\%$ of the time.

However, each Fibonacci number contributes only one leading digit out of $\abs{\sigma_{10}(F_{n})} \sim n\log_{10}{\phi}$ total digits. Up to $F_{m}$, the leading digits contribute $m$ digits out of a total of $D(m) \deq \sum_{n=1}^{m}\abs{\sigma_{10}(F_{n})} \sim \frac{1}{2}m^{2}\log_{10}{\phi}$ digits, where $D(m)$ denotes the number of base-$10$ digits in the concatenation of $F_{1}, \ldots, F_{m}$. The Benford bias therefore affects a fraction $O(1/m)$ of all digits and is asymptotically negligible.

More generally, for any fixed position $j$ from the left, the $j$-th digit of $F_{n}$ (when $F_{n}$ has at least $j$ digits) is a function of the fractional part $f_{n} = \{\log_{10}{F_{n}}\}$ alone, namely $\floor{10^{j-1+f_{n}}} \bmod 10$, and $f_{n} = \{n\log_{10}{\phi} + c + \eta_{n}\}$ differs from the linear sequence $\{n\log_{10}{\phi} + c\}$ only by the exponentially small Binet correction $\eta_{n} = O(\phi^{-2n})$ made explicit in Section~\ref{subsec:deep}. The bias of the digit-extraction function away from $1/10$ for each digit value is $O(10^{-j})$, decaying exponentially with the position \cite{diaconis}. Summing over all fixed positions, and writing $C_{d}(m)$ for the number of occurrences of the digit $d$ among the first $D(m)$ digits of $\FF$, the total structural bias in the frequency of $d$ is
\begin{equation}
\abs{\frac{C_{d}(m)}{D(m)} - \frac{1}{10}}_{\text{structural}} \leq \frac{\sum\limits_{j=1}^{\infty} O(10^{-j}) \cdot m}{\frac{1}{2}m^{2}\log_{10}{\phi}} = O(1/m).
\end{equation}

\subsection{The trailing digits}
The trailing digits of $F_{n}$ are governed by $F_{n} \bmod 10^{k}$, which, by Wall's theorem \cite{wall}, is purely periodic with Pisano period $\pi(10^{k})$, where $\pi(m)$ denotes the least integer $P \geq 1$ such that $F_{n+P} \equiv F_{n} \pmod{m}$ for all $n \geq 1$; one has $\pi(10^{k}) \asymp 10^{k}$. The distribution of these trailing blocks over one period is non-uniform for every $k$: by a theorem of Kuipers and Shiue \cite{kuipers}, with the general second-order case treated by Bumby \cite{bumby}, the Fibonacci sequence is uniformly distributed modulo $m$ if and only if $m$ is a power of $5$, and $10^{k} = 2^{k}5^{k}$ is never such a power. The obstruction is entirely $2$-adic: modulo $5^{k}$ the sequence is equidistributed \cite{niederreiter}, whereas modulo $2$ it runs $1, 1, 0, 1, 1, 0, \ldots$ with period $3$, so that $F_{n}$ is even precisely when $3 \mid n$ and odd values outnumber even ones two to one.

For $k = 1$ the period is $\pi(10) = 60$, and this parity bias renders the last digit non-uniform in a completely explicit way: over one period each odd digit $\{1,3,5,7,9\}$ occurs $8$ times and each even digit $\{0,2,4,6,8\}$ occurs $4$ times, that is, with frequencies $2/15$ and $1/15$ respectively \cite{spilker}. The same parity mechanism biases the trailing $k$-blocks for every $k$.

However, the same counting argument employed for the leading digits applies. The trailing $k$ digits of each $F_{n}$ contribute $k$ digits per Fibonacci number, for a total of $km$ out of $D(m) \sim \frac{1}{2}m^{2}\log_{10}{\phi}$ digits. For any fixed $k$ this fraction is $O(k/m) \to 0$, so the Pisano bias of the trailing digits, like the Benford bias of the leading digits, affects an asymptotically negligible fraction of $\FF$.

\subsection{The deep digits}
\label{subsec:deep}
The leading and trailing digits each contribute a vanishing fraction of $\FF$. The bulk of the digits occupy ``deep'' positions: writing $L_{n} \deq \abs{\sigma_{10}(F_{n})} \sim n\log_{10}{\phi}$ for the digit length of $F_{n}$, the positions $j \leq \gamma L_{n}$ account for only a fraction $\gamma$ of the digits of $F_{n}$, for any fixed $0 < \gamma < 1$, so the typical digit of $\FF$ sits at a depth comparable to the length of the Fibonacci number containing it. For such positions the counting arguments of the previous subsections give nothing, and the natural tool is equidistribution combined with a discrepancy bound. We set this up at a fixed depth $j$ and a fixed cutoff $m$.

The starting point is an exact digit-extraction identity. For $1 \leq j \leq L_{n}$, the $j$-th digit of $F_{n}$, counted from the left, equals $\floor{10^{j-1+f_{n}}} \bmod 10$, where $f_{n} \deq \{\log_{10}{F_{n}}\}$; indeed $F_{n} = 10^{L_{n}-1+f_{n}}$ by the definition of $L_{n}$ and $f_{n}$. Moreover, the Binet formula $F_{n} = (\phi^{n}/\sqrt{5})(1-(-1)^{n}\phi^{-2n})$ gives $\log_{10}{F_{n}} = n\log_{10}{\phi} + c + \eta_{n}$, with $c = -\frac{1}{2}\log_{10}{5}$ as before and $\abs{\eta_{n}} \leq \phi^{-2n}$, so the parameter $f_{n}$ is an exponentially small perturbation of the linear equidistributed sequence $\{n\alpha + c\}$, where $\alpha = \log_{10}{\phi}$. No approximation is involved in the identity itself; the rounding in $F_{n} = \floor{\phi^{n}/\sqrt{5} + \frac{1}{2}}$ enters only through $\eta_{n}$ and, as we will see, is harmless at every depth.

Fix a depth $j \geq 2$ (the case $j = 1$ is the Benford analysis above) and a cutoff $m$, let 
\[
n_{j} \deq \min\{n \colon L_{n} \geq j\} \sim j/\log_{10}{\phi}
\]
be the first index whose Fibonacci number reaches depth $j$, and put $M \deq m - n_{j} + 1$. The empirical frequency of the digit $d$ at depth $j$ across $F_{n_{j}}, \ldots, F_{m}$ is
\begin{equation}
\nu_{d}(j;m) \deq \frac{1}{M} \sum_{n=n_{j}}^{m} \one[\floor{10^{j-1+f_{n}}} \equiv d \mkern-12mu\pmod{10}] = \frac{1}{M} \sum_{n=n_{j}}^{m} g_{j,d}(f_{n}),
\end{equation}
where 
\[
g_{j,d}(f) \deq \one[\floor{10^{j-1+f}} \equiv d \mkern-12mu\pmod{10}]
\]
for $f \in [0,1)$. As $f$ increases over $[0,1)$, $\floor{10^{j-1+f}}$ steps through the integers $k = 10^{j-1}, \ldots, 10^{j}-1$, dwelling on each for an interval of length $\log_{10}(1+1/k) \asymp 10^{-j}$; hence $g_{j,d}$ is the indicator of a disjoint union of $9 \cdot 10^{j-2}$ such intervals, one for each $k \equiv d \pmod{10}$, with mean $\int_{0}^{1} g_{j,d}(f)\,\df = \frac{1}{10} + O(10^{-j})$, the positional Benford bias of \cite{diaconis}, and total variation $V(g_{j,d}) = 18 \cdot 10^{j-2} + O(1)$, exponentially large in the depth. Koksma's inequality (the one-dimensional Koksma--Hlawka inequality) \cite[Chap.~2, Thm.~5.1]{kuipers-niederreiter} then gives
\begin{equation}
\label{eq:kh}
\abs{\nu_{d}(j;m) - \int_{0}^{1}g_{j,d}(f)\,\df\,} \leq V(g_{j,d}) \cdot D_{M}^{*},
\end{equation}
where $D_{M}^{*}$ is the star discrepancy of the $M$ parameters $f_{n_{j}}, \ldots, f_{m}$.

Two remarks are in order before \eqref{eq:kh} is put to use. First, the Binet correction is negligible uniformly in $j$: splitting the range at $n_{0} = \lceil \log_{\phi}{m} \rceil$, the at most $n_{0}$ initial points alter the counting proportions that define $D_{M}^{*}$ by $O((\log{m})/M)$, while for $n > n_{0}$ each point is shifted by $\abs{\eta_{n}} \leq \phi^{-2n_{0}} \leq m^{-2}$, and a uniform shift of size $\delta$ changes the star discrepancy by at most $\delta$; hence the discrepancy of the $f_{n}$ exceeds that of the linear sequence $\{n\alpha + c\}$ by $O((\log{m})/m)$, at every depth. The depth $j$ never enters this comparison, which disposes of the rounding issue. Second, for the linear sequence Weyl's theorem gives $D_{M}^{*} = o(1)$ unconditionally, while sharp polynomial rates of the form $D_{M}^{*} = O((\log{M})^{2}/M)$ hold under Diophantine assumptions on $\alpha$ (e.g.\ for badly approximable or finite-type $\alpha$) and are not known unconditionally for $\alpha = \log_{10}{\phi}$.

What \eqref{eq:kh} delivers, and where it fails, can now be read off. At any fixed depth $j$, the unconditional $D_{M}^{*} = o(1)$ already yields $\nu_{d}(j;m) \to \frac{1}{10} + O(10^{-j})$ as $m \to \infty$: every fixed position is asymptotically uniform up to its exponentially small positional bias, in agreement with the leading-digits analysis. The bound, however, degrades exponentially with the depth. Even granting the optimistic conditional rate, the right-hand side of \eqref{eq:kh} is of order $10^{j}(\log{m})^{2}/m$, which tends to $0$ only in the range $10^{j} = o(m/(\log{m})^{2})$, that is, for depths $j \leq (1-o(1))\log_{10}{m}$, and becomes vacuous, larger than the trivial bound $\abs{\nu_{d} - \frac{1}{10}} \leq 1$, as soon as $10^{j}$ reaches $m/(\log{m})^{2}$. This horizon is intrinsic to the method and not a reflection of missing Diophantine information about $\log_{10}{\phi}$: every set of $M$ points in $[0,1)$ has star discrepancy $D_{M}^{*} \geq 1/(2M)$, so the right-hand side of \eqref{eq:kh} exceeds $10^{j-1}/(2m)$ for every parameter sequence whatsoever, and once $j - \log_{10}{m} \to \infty$ the inequality is vacuous even against a hypothetical sequence of optimal discrepancy. No bounded-variation discrepancy argument can see past depth $\asymp \log_{10}{m}$.

This horizon falls exponentially short of the deep digits. The positions at depths $j = O(\log{m})$ across all of $F_{1}, \ldots, F_{m}$ number $O(m\log{m})$ out of $D(m) \sim \frac{1}{2}m^{2}\log_{10}{\phi}$, an $O((\log{m})/m)$ fraction, while the typical digit of $F_{n}$ sits at depth proportional to $L_{n} \asymp n$, beyond the horizon for all but the first $O(\log{m})$ Fibonacci numbers; at such depths $V(g_{j,d}) \asymp 10^{j}$ is exponential in $n$, while no discrepancy, however favorable, decays faster than polynomially in $m$. The place-value analysis of Section~\ref{subsec:benfield-manes} will encounter exactly the same $O(m\log{m})$-digit horizon from the opposite, least-significant side. We also note that, even within its horizon, \eqref{eq:kh} controls the frequencies $\nu_{d}(j;m)$ aggregated over $n$, whereas condition~\eqref{eq:condition} demands digit statistics within each individual $F_{n}$; converting depth-by-depth averages into almost-all-$n$ statements would require second-moment information, on correlations across depths within a single $F_{n}$, that discrepancy estimates do not supply.

We emphasize that this is a failure of the Koksma--Hlawka discrepancy route, not an impossibility result. The argument shows that bounded-variation bounds combined with equidistribution of $\{n\log_{10}{\phi}\}$ cannot establish condition~\eqref{eq:condition}, and it leaves open the possibility that some altogether different technique could control the deep digits. The digit at a deep position within $F_{n}$ oscillates, as a function of the equidistributed parameter $f_{n}$, on a scale exponentially finer than any discrepancy bound can resolve. The problem is closely related to the open question of whether the digits of $\alpha^{n}$ for algebraic $\alpha > 1$ are normally distributed in base $10$, a question that remains unresolved even for $\alpha = 2$.

\subsection{The Benfield--Manes place-value approach}
\label{subsec:benfield-manes}

A different angle on the same problem was taken by Benfield and Manes \cite{benfield-manes}, who proposed to attack the normality of $\FF$ via the periodicity of the Fibonacci sequence modulo powers of the base. For an integer base $\beta \geq 2$ and $k \geq 0$, define
\begin{equation}
\label{eq:phi-place}
\Phi_{\beta^{k}}(n) \deq \floor{F_{n}/\beta^{k}} \bmod \beta,
\end{equation}
the digit at the $\beta^{k}$-place of $F_{n}$ in base $\beta$, with the convention $\Phi_{\beta^{k}}(n) = 0$ when $F_{n} < \beta^{k}$. By Wall's theorem \cite{wall}, the Fibonacci sequence is purely periodic modulo every integer $m$, with Pisano period $\pi(m)$. Since the $\beta^{k}$-place digit depends only on $F_{n} \bmod \beta^{k+1}$, the sequence $(\Phi_{\beta^{k}}(n))_{n \in \NN}$ is itself periodic, with period dividing $\pi(\beta^{k+1})$. Combining this with classical residue-distribution results of Niederreiter \cite{niederreiter} for moduli $5^{k}$ and Jacobson \cite{jacobson} for moduli $2^{k}$ with $k \geq 5$ (and more generally for $5^{x}2^{y}$), Benfield and Manes show that for every base of the form $\beta = 5^{x}2^{y}$ each digit $d \in \{0,1,\ldots,\beta-1\}$ appears in $(\Phi_{\beta^{k}}(n))_{n=1}^{\pi(\beta^{k+1})}$ exactly $\pi(\beta^{k+1})/\beta$ times. In other words, the place-value sequence at every fixed depth is uniformly distributed over one Pisano period. From this they conclude that $\FF$ is normal in every base of the form $5^{x}2^{y}$.

We accept the Niederreiter--Jacobson--Wall machinery exactly as stated: for every base $\beta = 5^{x}2^{y}$ and every depth $k \geq 0$, the place-value sequence $(\Phi_{\beta^{k}}(n))_{n \in \NN}$ is periodic with period dividing $\pi(\beta^{k+1})$ and each digit $d \in \{0,1,\ldots,\beta-1\}$ appears exactly $\pi(\beta^{k+1})/\beta$ times per period. What does not follow, in our view, is the inference from this per-period uniformity of each fixed place-value sequence to normality of the concatenated constant $\FF$. The obstruction is best appreciated by viewing the digits of $F_{1}, F_{2}, F_{3}, \ldots$ as the entries of an infinite ragged array whose $n$-th column holds the base-$\beta$ digits of $F_{n}$, written from the most significant digit at the top down to the units digit at the bottom and aligned along this common bottom (units) row, so that row $k \geq 0$, counted upward from the bottom, holds the $\beta^{k}$-place digits $\Phi_{\beta^{k}}(n)$. The concatenation $\FF$ reads this array {column by column}, taking each column from top (most significant) to bottom (least significant). There are then two distinct natural senses in which one might ask for ``uniform digit distribution'':
\begin{itemize}
\item[--] \emph{Row uniformity:} fix a row $k$ and let $n$ vary. This is what the Niederreiter--Jacobson results, fed through Wall periodicity, deliver in the bases $5^{x}2^{y}$.
\item[--] \emph{Column uniformity:} fix a column $n$ (large) and let the row index vary within that column. This is the requirement that the digit string of $F_{n}$ be $(\varepsilon,k)$-normal, that is, condition~($iii$) of Theorem~\ref{thm:criterion}.
\end{itemize}
The two senses are dual but logically independent. An infinite digit array can have all rows uniform without any column being uniform, and vice versa. The reading order of $\FF$ is the column order; the uniformity called upon by the sufficient criterion of Theorem~\ref{thm:criterion} is therefore column-wise rather than row-wise. We stress that this concerns one sufficient route only: column-wise $(\varepsilon,k)$-normality of almost all $F_{n}$ suffices for the normality of $\FF$, but it is not known to be necessary, and $\FF$ could in principle be normal without it.

To illustrate the gap between the two senses, the following example exhibits a ragged digit array with the strongest possible form of row uniformity, to wit, exact periodic balance of the kind delivered by the Niederreiter--Jacobson--Wall machinery, whose column-wise concatenation is simply normal but fails already at block length $2$.

\begin{example}[Row uniformity does not imply column-wise normality]
\label{ex:counterexample}
Fix base $\beta = 10$. Define a ragged digit array by specifying, for each $n \geq 1$, a column length $\ell_{n} = n$ and a type $t_{n} \in \{0,1,\dots,9\}$ with $t_{n} \equiv n \!\pmod{10}$, and declare the $n$-th column to be the digit string $t_{n}\,t_{n}\,\cdots\,t_{n}$ containing $\ell_{n}$ copies of $t_{n}$. In the row/column convention of this subsection, row $k$ then consists of the $k$-th place-value digit of each column long enough to have one, namely
\begin{equation}
(\Phi_k(n))_{n:\, \ell_{n}>k} \,=\, (t_{n})_{n>k} \,=\, t_{k+1}, t_{k+2}, t_{k+3}, \dots,
\end{equation}
which is a shift of the period-$10$ sequence $0, 1, 2, \dots, 9$. In particular, every row is purely periodic, with period $10$, and each digit
$d \in \{0,1,\dots,9\}$ appears exactly once per period at every depth $k$. This is precisely the kind of exact per-period balance furnished by the Niederreiter--Jacobson--Wall machinery for the Fibonacci digit array in bases
$5^{x}2^{y}$.

The concatenation read column by column is
\begin{equation}
C \deq t_{1} (t_{2}\,t_{2}) (t_{3}\,t_{3}\,t_{3}) (t_{4}\,t_{4}\,t_{4}\,t_{4})\cdots.
\end{equation}
Concretely, since $t_{n} \,\equiv\, n \!\pmod{10}$ and $\ell_{n} = n$, constant $C = 0.122333444455555\cdots$, a single $1$, then two $2$s, three $3$s, four $4$s, and so on, with the tenth column contributing ten $0$s. Thus digit $d$ appears in maximal runs of length $\ell_{n}$ whenever $t_{n}=d$. Single-digit frequencies converge to $1/10$. If
\begin{equation}
L_{N} \deq \sum_{n \leq N} \ell_{n} = \sum_{n \leq N} n,
\end{equation}
then for each $d \in \{0, \dots, 9\}$
\begin{equation}
\sum_{\substack{n \leq N \\ t_{n}=d}} \ell_{n} = \frac{1}{10}L_{N} + O(N),
\end{equation}
because the indices in each residue class modulo $10$ contribute one tenth of the sum $\sum\limits_{n \leq N} n$ up to an $O(N)$ error. Since $L_{N} \sim N^{2}/2$, it follows that
\begin{equation}
\lim_{N \to \infty} \frac{\sum\limits_{\substack{n \leq N \\ t_{n}=d}} \ell_{n}}{L_{N}}
= \frac{1}{10} \quad \text{for every } d\in\{0,\dots,9\}.
\end{equation}
Thus the column-wise concatenation is simply normal.

At depth $2$, however, the situation is starkly non-uniform. The $2$-block $dd$ occurs $\ell_{n}-1$ times inside every column of type $t_{n}=d$, so
\begin{equation}
\lim_{N \to \infty}
\frac{\#\{(d,d)\text{-blocks in the first }L_{N}\text{ digits of }C\}}{L_{N}-1}
= \lim_{N \to \infty} \frac{\sum\limits_{\substack{n \leq N \\ t_{n}=d}}(\ell_{n}-1)}{L_{N}-1} = \frac{1}{10}.
\end{equation}
By contrast, if $d\ne e$, then the block $de$ can arise only at a boundary between two consecutive columns, and hence contributes at most one occurrence per boundary. Therefore
\begin{equation}
\frac{\#\{(d,e)\text{-blocks in the first }L_{N}\text{ digits of }C\}}{L_{N}-1}
\leq
\frac{N-1}{L_{N}-1}
\longrightarrow 0.
\end{equation}
The $2$-block distribution of $C$ is therefore concentrated on the diagonal $\{(d,d)\colon d \in \{0, \dots, 9\}\}$, with mass $1/10$ on each diagonal block and mass $0$ on every off-diagonal block, instead of the uniform value $1/100$ required for normality at block length $2$.

In summary, in the above construction every row of the array is perfectly balanced, with exact periodic uniformity at every depth, yet the column-wise concatenation is simply normal and still fails already at block length $2$. Row uniformity and column-wise normality are logically independent properties of a ragged digit array, and no purely row-based argument can suffice to establish the latter.
\end{example}

Example~\ref{ex:counterexample} should be understood only as a counterexample to the inference from fixed-depth row uniformity to column-wise normality in a given base. By itself, it does not address the stronger (Pillai-based) route based on simple normality in all powers of the base.

To see why row uniformity does not imply normality of $\FF$, fix a base $\beta = 5^{x}2^{y}$ and consider the partial concatenation of $F_{1}, \ldots, F_{m}$. The contribution of row $k$ to this partial concatenation consists of those $n \in \{1,\ldots,m\}$ with $F_{n} \geq \beta^{k}$, namely $n_{k} \leq n \leq m$ with $n_{k} \sim k\log\beta/\log\phi$. The fact that $(\Phi_{\beta^{k}}(n))$ contains each digit equally often {over one full Pisano period of length $\pi(\beta^{k+1}) \asymp \beta^{k+1}$} is informative for the partial sum only when $m - n_{k} \gg \pi(\beta^{k+1})$, i.e., when $k \ll \log_{\beta} m$. The rows where the per-period averaging has effectively occurred therefore contribute only $O(m\log m)$ digits to the partial concatenation, out of a total of $D(m) \sim \frac{1}{2}m^{2}\log\phi/\log\beta$. The ``averaged'' fraction of the digits of $\FF$ at stage $m$ is thus $O((\log m)/m) \to 0$, while the bulk of the digits live in rows $k \in [\log_{\beta}m, \, m\log\phi/\log\beta]$, where the period-averaging argument is silent. These ``deep but not deep enough'' rows are precisely the digits at large place values within large Fibonacci numbers, which is exactly the obstacle isolated in Section~\ref{subsec:deep} above.

The two perspectives can therefore be summarized as follows. The Benfield--Manes approach establishes {row} uniformity of the digit array of $F_{n}$ in the bases $5^{x}2^{y}$; the Pollack--Vandehey criterion of Theorem~\ref{thm:criterion} demands column uniformity in the form of $(\varepsilon,k)$-normality of almost all Fibonacci numbers; and neither implies the other. In particular, the bases $5^{x}2^{y}$ that are most favorable for the row-side analysis enjoy no special status from the column side. Although $(\varepsilon,k)$-normality is itself a base-$b$ notion and condition~\eqref{eq:condition} is stated here in base $10$, the corresponding column-side obstruction---namely, asking whether almost all $F_{n}$ have $(\varepsilon,k)$-uniform digit strings in the chosen base---arises in every base, and the special bases $5^{x}2^{y}$ do not appear to simplify it. Condition~\eqref{eq:condition} and its analogues in other bases remain, in our view, the natural route to the normality of $\FF$ via the sufficient criterion of Theorem~\ref{thm:criterion}, and the principal obstacle to carrying that route through.


\section{Numerical experiments}
\label{sec:experiments}

To collect empirical evidence, we compute finite prefixes of $\FF$ by concatenating $F_{1}, F_{2}, \ldots, F_{N}$ for large $N$ and measure:

\begin{enumerate}
\item[($i$)] \emph{Single-digit frequencies}: the count $C_{d}(N)$ of each digit $d \in \{0,\ldots,9\}$ in the first $D(N) = \sum_{n=1}^{N}\abs{\sigma_{10}(F_{n})}$ digits of $\FF$, and the deviation $C_{d}(N)/D(N)-1/10$.

\item[($ii$)] \emph{Block frequencies}: the count of each block of length $k = 2, 3$, and $4$ in $\FF$, including cross-boundary blocks between consecutive Fibonacci numbers, and the deviation from $10^{-k}$.

\item[($iii$)] \emph{Individual $(\varepsilon,k)$-normality}: for each $F_{n}$, the maximum deviation $\delta_{n,k} = \max_{w \in \{0, \ldots, 9\}^{k}} \abs{N(w,\sigma_{10}(F_{n}))/\abs{\sigma_{10}(F_{n})}-10^{-k}}$, and the fraction of $n \leq N$ for which $\delta_{n,k} > \varepsilon$ as a function of $N$.
\end{enumerate}

\noindent All computations were carried out in Python~3.12 using the language's native arbitrary-precision integers, with no external libraries. The digit statistics are accumulated in a single streaming pass in which the concatenation is never materialized: each $F_{n}$ is obtained from its predecessors by one arbitrary-precision addition, converted once to its base-$b$ digit string, and folded into the running digit and $k$-block counters, so that peak memory is that of a single Fibonacci number plus the $b^{k}$ counter tables, rather than that of the $2.6 \times 10^{10}$-digit stream. Blocks straddling the junction between consecutive Fibonacci numbers are counted exactly once by carrying the last $k_{\max} - 1$ digits of the stream from one $F_{n}$ to the next, and the positional classification of Section~\ref{subsec:decompose10} is performed in the same pass, so the total cost is $O(k_{\max})$ character operations per digit of the concatenation. The subquadratic \texttt{int}-to-\texttt{str} conversion introduced in Python~3.11 is essential both for the streaming conversion and for the per-$F_{n}$ diagnostics, since the largest Fibonacci numbers in our run have $\lfloor 500\,000\log_{10}{\phi} \rfloor + 1 = 104\,494$ decimal digits. The full base-$10$ run up to $N = 500{,}000$, with block analysis to $k = 4$ and positional decomposition, required approximately $40\,500$~seconds ($\approx 11$\,h) of wall time on an Apple M1 Pro chip. The corresponding base-$2$ run took approximately $132\,400$~seconds ($\approx 37$\,h), roughly $3.3$ times longer, in line with the fact that the base-$2$ concatenation has roughly $3.3$ times more digits and the block-counting cost is essentially linear in the symbol count for both bases. The complete source code, the original output files from which every table in this paper is drawn, and a post-processing script for Good's serial statistic are openly available; see the data and code availability statement at the end of the paper.

\begin{remark}[Interpretation of the statistical tests]
\label{rem:stats}
The chi-squared statistics, $p$-values, and $z$-scores reported below are computed under the null hypothesis that the observed digits are independent draws from the uniform distribution on $\{0,\ldots,b-1\}$. For a deterministic concatenation with overlapping blocks, this \iid\ model is not formally justified, and the reported $p$-values should be read as heuristic diagnostics rather than as tests of a well-specified null. Likewise, the $D^{-1/2}$ benchmark we compare against below is the fluctuation scale of an \iid\ uniform sequence, not a consequence of normality per~se: a normal number need not exhibit discrepancy decaying at that rate. The observed digit statistics are, however, compatible with \iid-like fluctuations at the tested scales. We view this as heuristic evidence, not a proof of normality.
\end{remark}

\subsection{Single-digit frequencies}
We computed the concatenation of $F_{1}, \ldots, F_{N}$ for $N = 500{,}000$, producing $D(N) = 26{,}123{,}582{,}538$ base-$10$ digits, or about $26.1$~GB of decimal text at one byte per digit. Notice, however, that since the computation is streaming, these digits are never written to disk. The observed single-digit frequencies are shown in Table~\ref{tab:k1}. The maximum deviation from $1/10$ is $3.43 \times 10^{-6}$, and the chi-squared statistic is $\chi^{2} = 7.48$ on $9$ degrees of freedom, giving a $p$-value of $0.587$. The null hypothesis of uniform digit distribution cannot be rejected.

\begin{table}[h]
\centering
\small
\begin{tabular}{ccccc}
\hline \hline
Digit $d$ & Count $C_{d}(N)$ & Frequency & Deviation $\times 10^{6}$ & $z$-score \\
\hline
$0$ & $2\,612\,318\,631$ & $0.09999848$ & $-1.52$ & $-0.817$ \\
$1$ & $2\,612\,447\,898$ & $0.10000343$ & $+3.43$ & $+1.849$ \\
$2$ & $2\,612\,361\,314$ & $0.10000012$ & $+0.12$ & $+0.063$ \\
$3$ & $2\,612\,407\,108$ & $0.10000187$ & $+1.87$ & $+1.008$ \\
$4$ & $2\,612\,357\,524$ & $0.09999997$ & $-0.03$ & $-0.015$ \\
$5$ & $2\,612\,396\,862$ & $0.10000148$ & $+1.48$ & $+0.796$ \\
$6$ & $2\,612\,316\,010$ & $0.09999838$ & $-1.62$ & $-0.871$ \\
$7$ & $2\,612\,304\,937$ & $0.09999796$ & $-2.04$ & $-1.100$ \\
$8$ & $2\,612\,351\,037$ & $0.09999972$ & $-0.28$ & $-0.149$ \\
$9$ & $2\,612\,321\,217$ & $0.09999858$ & $-1.42$ & $-0.764$ \\
\hline \hline
\end{tabular} \\[6pt]
\caption{Single-digit frequencies in the first $D(N) = 26{,}123{,}582{,}538$ digits of $\FF$ ($N = 500{,}000$). The chi-squared test yields $\chi^{2} = 7.48$, $\df = 9$, $p = 0.587$.}
\label{tab:k1}
\end{table}

\subsection{Block frequencies}
Although normality is a statement about all block lengths $k \geq 1$ simultaneously, the tests we can carry out empirically are limited to small $k$. At $N = 500{,}000$ the expected count per $k$-block is $D(N)/10^{k} \approx 2.6 \times 10^{10-k}$, which is still large at $k = 6$ (about $2.6 \times 10^{4}$ per block) and drops below the level needed for a reliable full block-frequency test only around $k = 9$. The range $k \leq 4$ is nevertheless informative in the present problem, because the structural sources of bias identified in Section~\ref{sec:structural}, namely, the Benford distribution of leading digits and the Pisano periodicity of trailing digits, act at $k = 1$ and are coupled to each other at $k = 2$. Any gross failure of simple normality in $\FF$ would show up first at the smallest block lengths, and the tests below therefore probe the most likely avenues of structured deviation.

Block counts are collected with a sliding window of stride~$1$, including cross-boundary blocks. Because sliding-window $k$-block counts in an i.i.d.\ sequence are not multinomial---blocks with self-overlaps (e.g.\ ``$00$'' at shift~$1$) have larger variance than those without---the naive Pearson statistic with $b^{k}-1$ degrees of freedom is not $\chi^{2}$-distributed under the uniform null. The correct reference distribution for sliding blocks is Good's serial statistic \cite{good-1953},
\begin{equation}
\label{eq:good}
\Delta\chi^{2}_{k} \,\deq\, \chi^{2}_{\,\text{naive}}(k) - \chi^{2}_{\,\text{naive}}(k-1),
\end{equation}
which is asymptotically $\chi^{2}$ with $b^{k}-b^{k-1}$ degrees of freedom. We therefore report both the naive and the Good-corrected $p$-values; the latter are the formally valid reference values under the \iid\ sliding-window null.

For $k = 2$, all $100$ two-digit blocks were counted. The largest absolute deviation from $10^{-2}$ is $2.29 \times 10^{-6}$, achieved by the block~``$12$''; the mean absolute deviation is $4.9 \times 10^{-7}$. The naive chi-squared is $\chi^{2} = 102.0$ on $99$ degrees of freedom (naive $p = 0.398$). Good's corrected statistic is $\Delta\chi^{2}_{2} = 94.5$ on $90$ degrees of freedom ($p = 0.352$), not significant at the $5\%$ level.

For $k = 3$, all $1000$ three-digit blocks were counted. The largest deviation is $6.2 \times 10^{-7}$ (block~``$089$''), with naive $\chi^{2} = 1051.9$ on $999$ degrees of freedom (naive $p = 0.119$) and Good $\Delta\chi^{2}_{3} = 949.9$ on $900$ degrees of freedom ($p = 0.121$).

For $k = 4$, all $10\,000$ four-digit blocks were counted. The largest deviation is $2.8 \times 10^{-7}$ (block~``$9211$''), with naive $\chi^{2} = 10\,198.0$ on $9999$ degrees of freedom (naive $p = 0.080$) and Good $\Delta\chi^{2}_{4} = 9146.1$ on $9000$ degrees of freedom ($p = 0.138$). No individual block exceeds the two-sided Bonferroni-adjusted critical value $\abs{z} \approx 4.565$ for $10\,000$ tests at family-wise level $5\%$, the largest observed $\abs{z}$-score being $4.51$. Both the $k=3$ and the $k=4$ block-frequency tests are consistent with the conjecture that $\FF$ is normal at these block lengths.

\subsection{Convergence rate}
Table~\ref{tab:evolution} shows the evolution of the maximum single-digit deviation as $N$ increases. A log-log regression of the maximum deviation on $D(N)$ yields
\begin{equation}
\max_{d} \abs{\frac{C_{d}(N)}{D(N)} - \frac{1}{10}} \approx 1.030 \cdot D(N)^{-0.5152}, \quad R^{2} = 0.9954,
\end{equation}
over seven orders of magnitude in $D(N)$. This regression uses the five sizes with $N \geq 100$; the smallest point ($N = 10$, only $14$ digits) is pre-asymptotic and is excluded. The fitted exponent is $0.515 \pm 0.020$ (standard error), consistent with the value $1/2$ characteristic of \iid\ uniform fluctuations, from which it differs by less than one standard error. Five points cannot pin the exponent down to better than about $4\%$, but the data give no indication of a systematic departure from the $D^{-1/2}$ rate. We emphasize, in line with Remark~\ref{rem:stats}, that the $D^{-1/2}$ benchmark is the \iid\ fluctuation scale and is not implied by normality per~se; what the data support is that the observed decay is compatible with \iid-like behavior over the scales tested.

\begin{table}[h]
\centering
\small
\begin{tabular}{rrccc}
\hline \hline
$N$ & $D(N)$ & $\max_{d}\abs{\text{dev}}$ & $\chi^{2}(k=1)$ & $p$-value \\
\hline
      $10$ &                $14$ & $1.86 \times 10^{-1}$ & $14.57$ & $0.103$ \\
     $100$ &            $1\,071$ & $2.98 \times 10^{-2}$ & $15.38$ & $0.081$ \\
    $1000$ &          $104\,750$ & $2.11 \times 10^{-3}$ & $10.50$ & $0.311$ \\
 $10\,000$ &      $10\,451\,934$ & $2.87 \times 10^{-4}$ & $20.97$ & $0.013$ \\
$100\,000$ &  $1\,044\,963\,704$ & $3.13 \times 10^{-5}$ & $21.42$ & $0.011$ \\
$500\,000$ & $26\,123\,582\,538$ & $3.43 \times 10^{-6}$ &  $7.48$ & $0.587$ \\
\hline \hline
\end{tabular} \\[6pt]
\caption{Evolution of the maximum single-digit deviation and chi-squared statistic as $N$ grows. The deviation decays at a rate compatible with the $D^{-1/2}$ fluctuation scale of an \iid\ uniform sequence; see Remark~\ref{rem:stats}. The log-log regression reported in the text uses the five sizes with $N \geq 100$, excluding the pre-asymptotic $N = 10$.}
\label{tab:evolution}
\end{table}

\subsection{Individual Fibonacci numbers}
We also computed, for each $F_{n}$ with $n \leq N$ and $\abs{\sigma_{10}(F_{n})} \geq 10$, the maximum single-digit deviation $\delta_{n,1}$. The fraction of $n$ for which $\delta_{n,1}$ exceeds various thresholds is shown in Table~\ref{tab:individual}. For $\varepsilon = 0.05$, only $0.12\%$ of Fibonacci numbers fail to be $(\varepsilon,1)$-normal; for $\varepsilon = 0.02$, the fraction is $0.79\%$; and for $\varepsilon = 0.01$, still only $3.20\%$. These figures are consistent with condition~(iii) of Theorem~\ref{thm:criterion} holding for $k=1$.

\begin{table}[h]
\centering
\small
\begin{tabular}{ccc}
\hline \hline
$\varepsilon$ & $\#\{n \colon \delta_{n,1} > \varepsilon\}$ & Fraction \\
\hline
$0.050$ &      $612$ & $0.12\%$ \\
$0.020$ &   $3\,952$ & $0.79\%$ \\
$0.010$ &  $15\,978$ & $3.20\%$ \\
$0.005$ &  $64\,224$ & $12.85\%$ \\
$0.002$ & $348\,281$ & $69.66\%$ \\
\hline \hline
\end{tabular} \\[6pt]
\caption{Fraction of Fibonacci numbers $F_{n}$ among the $499{,}956$ with $n \leq 500{,}000$ and $\abs{\sigma_{10}(F_{n})} \geq 10$ whose maximum single-digit deviation exceeds $\varepsilon$.}
\label{tab:individual}
\end{table}

A complementary comparison is informative. For a number with $K$ \iid\ uniform digits in base $10$, the standardized deviations $Z_{d} = (C_{d} - K/10)/\sqrt{0.09K}$ are approximately $N(0,1)$ and pairwise correlated with correlation $-1/9$ (since $\sum_{d} C_{d} = K$). Ignoring this mild correlation, $\EE[\max_{1 \leq d \leq 10}\abs{Z_{d}}] \approx 1.86$, so the expected maximum single-digit deviation is approximately $1.86\sqrt{0.09/K}$. Taking the ratio of the observed deviation $\delta_{n,1}$ to this random baseline, averaged over the $499\,047$ Fibonacci numbers with at least $200$ digits, we obtain a mean ratio of $1.00 \pm 0.28$. In other words, the single-digit distributions of individual Fibonacci numbers are, on average, statistically indistinguishable from those of random numbers of the same length, a finding consistent with condition~(iii), though of course not proving it.

\subsection{Positional decomposition}
\label{subsec:decompose10}

The block-frequency tests above treat all $k$-blocks in the concatenation on an equal footing, regardless of where they sit relative to the individual Fibonacci numbers. A finer analysis is possible by classifying each $k$-block into one of four positional categories:
\begin{itemize}
\item[--] \emph{leading}: the first $k$-block of some $F_{n}$, starting at position $0$ within $F_{n}$;
\item[--] \emph{trailing}: the last $k$-block of some $F_{n}$, starting at position $L_{n} - k$, when distinct from the leading block;
\item[--] \emph{middle}: a block at a strictly interior position within some $F_{n}$;
\item[--] \emph{boundary}: a block straddling the junction between consecutive $F_{n}$ and $F_{n+1}$.
\end{itemize}
These four categories partition all $k$-blocks. Since each Fibonacci number contributes at most two non-middle blocks (leading and trailing) plus at most $k-1$ boundary blocks, the leading, trailing, and boundary blocks together account for $O(kN)$ out of $D(N) \sim \frac{1}{2}N^{2}\log_{b}\phi$ total blocks and are an asymptotically vanishing fraction for any fixed $k$.

In base $10$, the leading digits of $F_{n}$ follow Benford's law (digit $1$ appears with frequency $\log_{10}{2} \approx 0.301$, digit $9$ with frequency $\log_{10}(10/9) \approx 0.046$), while the trailing digits follow the Pisano periodicity modulo $10$ with period $60$: even digits appear with frequency $1/15$ and odd digits with frequency $2/15$ \cite{spilker}. Both biases are visible in the decomposition but are asymptotically negligible.

Table~\ref{tab:decompose10} reports Good's serial statistic~\eqref{eq:good} computed separately on the middle blocks and on the boundary blocks, for $k = 2, 3, 4$. For these category-restricted statistics the serial difference in~\eqref{eq:good} is taken within the category itself: for the middle category, $\Delta\chi^{2}_{k} = \chi^{2}_{\text{mid}}(k) - \chi^{2}_{\text{mid}}(k-1)$, where $\chi^{2}_{\text{mid}}(1)$ is the Pearson statistic of the interior digits (positions $1, \ldots, L_{n}-2$ within each $F_{n}$), which are precisely the symbols from which the middle blocks are drawn. The middle blocks, comprising over $99.99\%$ of all blocks, are fully consistent with uniformity at every block length: the interior digits give $\chi^{2}_{\text{mid}}(1) = 3.02$ on $9$ degrees of freedom ($p = 0.96$), and the serial statistics give $p = 0.64$ at $k=2$, $p = 0.20$ at $k=3$, and $p = 0.17$ at $k=4$. As in the base-$2$ decomposition of Section~\ref{subsec:decompose}, Good's serial reference law is formally exact for the full sliding-window counts under the \iid\ null, not for the filtered middle-block subset used here; since the base-$10$ middle blocks show no significant deviation at any $k$, this does not affect the conclusion, but the same caution applies in principle.

\begin{table}[h]
\centering
\small
\begin{tabular}{ccccccc}
\hline \hline
$k$ & Middle $\Delta\chi^{2}$ & $\df$ & Middle $p$ & Boundary $\Delta\chi^{2}$ & $\df$ & Boundary $p$ \\
\hline
$2$ &    $84.7$ &   $90$ & $0.638$ & --- &   $90$ & $< 10^{-6}$ \\
$3$ &     $935$ &  $900$ & $0.203$ & --- &  $900$ & $< 10^{-6}$ \\
$4$ &  $9\,126$ & $9000$ & $0.174$ & --- & $9000$ & $< 10^{-6}$ \\
\hline \hline
\end{tabular} \\[6pt]
\caption{Positional decomposition of the base-$10$ block-frequency test ($N = 500{,}000$) using Good's serial statistic~\eqref{eq:good} with degrees of freedom $b^{k}-b^{k-1}$. The middle blocks (interior to individual $F_{n}$, comprising $99.99\%$ of all blocks) are consistent with uniformity at every $k$. The boundary blocks ($< 0.006\%$) are massively non-uniform under any sensible null. The precise $\Delta\chi^{2}$ is suppressed as the naive and Good statistics both yield $p < 10^{-6}$.}
\label{tab:decompose10}
\end{table}

The boundary chi-squared values are enormous because the junction $F_{n} \,|\, F_{n+1}$ juxtaposes a trailing digit governed by the Pisano period (odd digits favored $2:1$) with a leading digit governed by Benford's law (digit $1$ at frequency $\approx 0.30$). Among the boundary $2$-blocks, ``$91$'', ``$31$'', ``$71$'', ``$51$'', and ``$11$'' are the most overrepresented, all reflecting the Benford bias of the next Fibonacci number. This deterministic pattern propagates to longer blocks in a predictable way.

\subsection{Summary of base-$10$ experiments}
The numerical evidence across all four tests---single-digit frequencies, block frequencies, individual $(\varepsilon,1)$-normality, and positional decomposition---is consistent with the conjecture that $\FF$ is simply normal in base $10$, and is consistent with normality at block lengths $k = 2, 3$, and $4$. The observed convergence rate is, within the precision afforded by the data, indistinguishable from the $D^{-1/2}$ fluctuation scale of an \iid\ uniform sequence; see, however, Remark~\ref{rem:stats}. The positional decomposition shows that the pooled interior blocks, aggregated across all Fibonacci numbers, are statistically consistent with uniformity, and the visible structured deviation is concentrated at the boundary between consecutive Fibonacci numbers, an artifact that becomes negligible as $N \to \infty$. This is an aggregate statement and does not imply that each individual $F_{n}$ has uniform interior digits.


\section{Base-$2$ experiments}
\label{sec:base2}

We repeated the full suite of experiments in base $2$, concatenating the binary representations of $F_{1}, \ldots, F_{N}$ for $N = 500{,}000$, producing $D(N) = 86{,}780{,}082{,}284$ binary digits, or about $87$~GB if stored as text with one byte per binary digit (although we never actually write all bits down), roughly $\log_{2}{10} \simeq 3.3$ times more digits than the base-$10$ concatenation for the same $N$.

\subsection{Global digit and block frequencies}
Table~\ref{tab:k1b2} shows the single-bit frequencies. The split is $50.000006\%$\,/\,$49.999994\%$, with $\chi^{2} = 0.11$ on $1$ degree of freedom ($p = 0.742$). For blocks of size $k = 2, 3, 4$, collected with a sliding window as in Section~\ref{subsec:decompose10}, the naive Pearson statistics on $2^{k}-1$ degrees of freedom are $2.06$ (naive $p = 0.560$), $7.71$ (naive $p = 0.359$), and $18.56$ (naive $p = 0.234$). The corresponding Good-corrected serial statistics~\eqref{eq:good} on $2^{k-1}$ degrees of freedom are $\Delta\chi^{2}_{2} = 1.95$ ($p = 0.377$), $\Delta\chi^{2}_{3} = 5.65$ ($p = 0.227$), and $\Delta\chi^{2}_{4} = 10.85$ ($p = 0.210$). Under either reference law the global block-frequency tests are well within the expected range under the null hypothesis of uniformity.

\begin{table}[h]
\centering
\small
\begin{tabular}{ccccc}
\hline \hline
Digit $d$ & Count $C_{d}(N)$ & Frequency & Deviation $\times 10^{7}$ & $z$-score \\
\hline
$0$ & $43\,390\,089\,668$ & $0.50000056$ & $+5.6$ & $+0.329$ \\
$1$ & $43\,389\,992\,616$ & $0.49999944$ & $-5.6$ & $-0.329$ \\
\hline \hline
\end{tabular} \\[6pt]
\caption{Single-bit frequencies in the first $D(N) = 86{,}780{,}082{,}284$ bits of the base-$2$ concatenation ($N = 500{,}000$). The chi-squared test yields $\chi^{2} = 0.11$, $\df = 1$, $p = 0.742$.}
\label{tab:k1b2}
\end{table}

The convergence of the maximum single-digit deviation with $D$ is shown in Table~\ref{tab:evolutionb2}. A log-log regression over the five sizes with $N \geq 100$ (again excluding the pre-asymptotic $N = 10$) yields max$\abs{\text{dev}} \approx 0.725 \cdot D^{-0.563}$ with $R^{2} = 0.977$. As in the base-$10$ case, the fitted exponent is $0.563 \pm 0.050$ (standard error), compatible with the $D^{-1/2}$ rate to within the uncertainty of a five-point fit ($1/2$ lies about $1.3$ standard errors away).

\begin{table}[h]
\centering
\small
\begin{tabular}{rrccc}
\hline \hline
$N$ & $D(N)$ & $\max_{d}\abs{\text{dev}}$ & $\chi^{2}(k=1)$ & $p$-value \\
\hline 
      $10$ &                $34$ & $1.18 \times 10^{-1}$ & $1.88$ & $0.170$ \\
     $100$ &            $3\,442$ & $4.07 \times 10^{-3}$ & $0.23$ & $0.633$ \\
    $1000$ &          $346\,809$ & $1.35 \times 10^{-3}$ & $2.53$ & $0.112$ \\
 $10\,000$ &      $34\,708\,959$ & $4.42 \times 10^{-5}$ & $0.27$ & $0.602$ \\
$100\,000$ &  $3\,471\,178\,185$ & $1.90 \times 10^{-6}$ & $0.05$ & $0.823$ \\
$500\,000$ & $86\,780\,082\,284$ & $5.59 \times 10^{-7}$ & $0.11$ & $0.742$ \\
\hline \hline
\end{tabular} \\[6pt]
\caption{Evolution of the maximum single-digit deviation in base $2$. As in base $10$, the log-log regression in the text uses the five sizes with $N \geq 100$.}
\label{tab:evolutionb2}
\end{table}

\subsection{Positional decomposition}
\label{subsec:decompose}

We repeat the positional decomposition of Section~\ref{subsec:decompose10} in base $2$. Here every Fibonacci number $F_{n}$ has a leading bit of $1$, and for $n \geq 3$ the trailing bit follows the Pisano period modulo $2$, which has length $3$: the pattern is $1, 1, 0, 1, 1, 0, \ldots$, so two-thirds of all trailing bits are $1$. These forced bits create a deterministic pattern at every junction $F_{n} \,|\, F_{n+1}$ in the concatenation.

Table~\ref{tab:decompose} reports Good's serial statistic~\eqref{eq:good} computed separately on the middle blocks and on the boundary blocks, for $k = 2, 3, 4$, with the serial difference again taken within each category, as in Section~\ref{subsec:decompose10}.

\begin{table}[h]
\centering
\small
\begin{tabular}{ccccccc}
\hline \hline
$k$ & Middle $\Delta\chi^{2}$ & $\df$ & Middle $p$ & Boundary $\Delta\chi^{2}$ & $\df$ & Boundary $p$ \\
\hline
$2$ &  $5.4$ &  $2$ & $0.067$  & --- & $2$ & $< 10^{-6}$ \\
$3$ &  $5.9$ &  $4$ & $0.207$  & --- & $4$ & $< 10^{-6}$ \\
$4$ &  $5.8$ &  $8$ & $0.669$  & --- & $8$ & $< 10^{-6}$ \\
\hline \hline
\end{tabular} \\[6pt]
\caption{Positional decomposition of the base-$2$ block-frequency test ($N = 500{,}000$) using Good's serial statistic~\eqref{eq:good} with degrees of freedom $b^{k}-b^{k-1} = 2^{k-1}$, the serial difference being taken within each category. The middle blocks (interior to individual $F_n$, comprising 99.997\% of all blocks) show no significant serial structure at any tested $k$; the residual interior signal is a single-bit imbalance, discussed in the text. The boundary blocks ($< 0.002\%$) are massively non-uniform under any sensible null. The precise $\Delta\chi^{2}$ is suppressed as the naive and Good statistics both yield $p < 10^{-6}$.}
\label{tab:decompose}
\end{table}

The boundary chi-squared values are enormous because the junction $F_{n} \,|\, F_{n+1}$ always places a leading $1$ immediately after a trailing bit that is $1$ two-thirds of the time, forcing the $2$-block ``$11$'' to appear at frequency $\approx 2/3$ instead of $1/4$ among boundary blocks. For $k = 2$, the blocks ``$00$'' and ``$10$'' never occur, and the pattern propagates to longer blocks in a predictable way.

Under Good's corrected null, taken within the middle ensemble, the middle blocks show no significant serial structure at any tested block length ($\Delta\chi^{2}_{2} = 5.4$, $p = 0.067$; $\Delta\chi^{2}_{3} = 5.9$, $p = 0.207$; $\Delta\chi^{2}_{4} = 5.8$, $p = 0.669$). The residual interior signal sits instead at the single-bit level: the pooled interior bits split $0.500004\,/\,0.499996$, a microscopic excess of zeros of relative size $4.4 \times 10^{-6}$ that is nevertheless formally significant ($\chi^{2}_{\text{mid}}(1) = 6.72$ on $1$ degree of freedom, $p = 0.0095$), because with an aggregate sample of $D \sim 8.7 \times 10^{10}$ bits the test has massive power against even asymptotically vanishing structural constraints, such as the finite lengths of the individual excised interior strings. The absolute deviations in the middle-block frequencies remain less than $2 \times 10^{-6}$, comparable in magnitude to the base-$10$ deviations. Thus, while a strict \iid\ null for the interior bits is formally rejected at the single-bit level, the pooled interior digits are uniformly distributed for all practical purposes, no block-level structure appears beyond the single-bit imbalance, and essentially all of the structured macroscopic deviation is a boundary artifact that becomes negligible as $N \to \infty$. This is a statement about the aggregate distribution of interior digits and does not, by itself, imply uniformity of the interior digits of each individual $F_{n}$.

\subsection{Individual Fibonacci numbers in base $2$}
The individual $(\varepsilon,1)$-normality statistics in base $2$ are even better than in base $10$, as expected from the larger number of digits per $F_{n}$ ($\lfloor n\log_{2}{\phi}\rfloor + 1 \approx 1.44n$ versus $\lfloor n\log_{10}{\phi}\rfloor + 1 \approx 0.21n$). Table~\ref{tab:individualb2} reports the fraction of Fibonacci numbers exceeding various thresholds.

\begin{table}[h]
\centering
\small
\begin{tabular}{ccc}
\hline \hline
$\varepsilon$ & $\#\{n \colon \delta_{n,1} > \varepsilon\}$ & Fraction \\
\hline
$0.050$ &     $132$ &  $0.03\%$ \\
$0.020$ &     $916$ &  $0.18\%$ \\
$0.010$ &  $3\,657$ &  $0.73\%$ \\
$0.005$ & $14\,436$ &  $2.89\%$ \\
$0.002$ & $87\,039$ & $17.41\%$ \\
\hline \hline
\end{tabular} \\[6pt]
\caption{Fraction of Fibonacci numbers (among the $499\,986$ with $n \leq 500{,}000$ and at least $10$ binary digits) whose maximum single-digit deviation exceeds $\varepsilon$ in base $2$.}
\label{tab:individualb2}
\end{table}

Comparing with Table~\ref{tab:individual}, the fractions at each threshold are substantially smaller in base $2$; for instance, $0.03\%$ versus $0.12\%$ at $\varepsilon = 0.05$, and $2.89\%$ versus $12.85\%$ at $\varepsilon = 0.005$. This improvement is consistent with the larger digit count per $F_{n}$ in binary. For the ratio of observed to expected maximum deviation in base $2$, the multinomial constraint $C_{0} + C_{1} = K$ forces $\abs{C_{0}/K - 1/2} \equiv \abs{C_{1}/K - 1/2}$, so under the \iid\ model the expected maximum deviation reduces to $\EE[\abs{Z}]/(2\sqrt K) = \sqrt{2/\pi}\,/(2\sqrt K)$ with $Z \sim N(0,1)$. Against this baseline, the mean ratio over the $499\,712$ Fibonacci numbers with at least $200$ binary digits is $1.00 \pm 0.75$; the larger standard deviation reflects the degeneracy of the two-symbol case, where the ``max'' is a single folded normal rather than the max over several.

\subsection{Summary}
The base-$2$ and base-$10$ experiments are mutually consistent and point to the same conclusion: the concatenated Fibonacci constant $\FF$ is numerically consistent with the behavior expected of a normal number at the scales tested. The positional decomposition in both bases shows that the Benford-biased leading digits and the Pisano-periodic trailing digits of each $F_{n}$ create a deterministic boundary pattern that is massively non-uniform in isolation, but this boundary layer comprises less than $0.006\%$ of all blocks at $N = 500{,}000$---a fraction that is $O(1/N)$ and tends to $0$ as $N \to \infty$---and has no discernible effect on the overall digit distribution. 

In base $10$, the pooled middle blocks are fully consistent with uniformity at every block length tested. In base $2$, Good's serial statistics on the pooled middle blocks are not significant at any tested block length; the only formally significant interior signal is a single-bit imbalance of relative size $4.4 \times 10^{-6}$ ($p = 0.0095$), microscopic in absolute terms and detectable only because of the enormous aggregate sample, so it should be interpreted cautiously. Taken together, the pooled interior blocks are very close to uniform and the boundary effects account for essentially all of the visible structured deviation. We emphasize that this is an aggregate statement across all Fibonacci numbers; the experiments do not establish that each individual $F_{n}$ has uniform interior digits, which is the content of condition~(iii) and remains open. Numerical evidence cannot settle the question, which depends on the asymptotic behavior of $F_{n}$ for $n$ arbitrarily large. The data, however, provide no hint of the kind of persistent bias that would rule out normality.


\section{Concluding remarks}
\label{sec:concluding}

We have identified the $(\varepsilon,k)$-normality of almost all Fibonacci numbers as a sufficient condition for the normality of the Fibonacci constant $\FF$, and shown that this condition lies beyond the reach of current equidistribution techniques due to the exponential oscillation of the digit function at deep positions within $F_{n}$. The problem belongs to a broader class of open questions about the digit distribution of exponential sequences. Even the simplest instance of this problem, namely, whether the digits of $2^{n}$ in base $10$ are equidistributed for most $n$, remains unresolved, despite considerable attention \cite{bugeaud}. A proof of condition~\eqref{eq:condition}, even for $k=1$, which would yield simple normality of $\FF$, would represent significant progress in the area.

Our numerical experiments, carried out on the first $500{,}000$ Fibonacci numbers in both base $10$ and base $2$, yielding $2.6 \times 10^{10}$ decimal digits and $8.7 \times 10^{10}$ bits, respectively, showed the maximum single-digit deviation decaying at a rate indistinguishable from the $D^{-1/2}$ \iid\ benchmark, and the block-frequency statistics for $k = 2, 3, 4$ revealing no significant departure from uniformity in either base, both compatible with \iid-like fluctuations at the tested scales. A positional decomposition of the $k$-block counts further shows that the visible structured deviation is almost entirely concentrated at the junctions between consecutive $F_{n}$, a deterministic artifact driven by Benford-biased leading digits meeting Pisano-periodic trailing digits, while the pooled interior blocks are close to uniform (with deviations at or below the $10^{-6}$ level). In line with Remark~\ref{rem:stats}, we stress that these observations are heuristic diagnostics rather than formal evidence: the statistics are suggestive but not decisive, providing evidence compatible with normality but no proof of it, and, crucially, the aggregate uniformity of pooled interior blocks does not imply the per-$F_{n}$ uniformity required by condition~\eqref{eq:condition}. The experiments therefore reveal no obstruction to normality but do not, and cannot, resolve it.

Two possible directions for future work are the following. First, one could exploit the fact that multi-digit addition with carry propagation is known to have a ``mixing'' effect on digit distributions when the summands are random \cite{diaconis-fulman}. A possible approach is to investigate whether iterating the Fibonacci recurrence produces a similar effect for the deterministic Fibonacci sequence, gradually driving the digits of $F_n$, and thence of $\FF$, toward equidistribution, though making this intuition rigorous (likely by devising a higher-order Markov chain of some sort) remains an open challenge. Second, one could seek weaker structural results that fall short of full normality but still provide nontrivial information, for instance bounds on the finite-state dimension of $\FF$ or proofs that the digit frequencies lie in a prescribed interval. We are particularly motivated by the first possibility, which, even if it does not lead to a solution, may still provide partial or obstruction results.


\section*{Declarations}

\subsection*{Conflict of interest}

The author has no relevant financial or non-financial interests to disclose. 

\subsection*{Funding}

The author received partial financial support from Funda\c{c}\~{a}o de Amparo \`{a} Pesquisa do Estado de S\~{a}o Paulo -- FAPESP, Brazil, through grant no.~2020/04475-7. 

\subsection*{Use of generative AI and AI-assisted technologies}

The author benefited from the use of AI language models in the development of this work. Initially, Anthropic's Claude~4.6~Opus, via the Claude Code application, was used to translate and optimize legacy C code and bash scripts into Python~3, enabling highly efficient handling of large integers. During the theoretical development in Section~\ref{subsec:benfield-manes}, the same model was prompted to produce a counterexample, which after mathematical refinement by the author became Example~\ref{ex:counterexample}.

During the preparation of the submitted version of the manuscript, Google's Gemini~3.1~Pro identified a covariance issue with overlapping blocks in Sections~\ref{sec:experiments} and \ref{sec:base2} and suggested the use of Good's serial statistic, resulting in the recalculation of the statistics without materially affecting the conclusions. 

The AI models did not participate in the drafting of Sections~\ref{sec:intro}--\ref{sec:reduction}, the majority of Sections~\ref{sec:structural}--\ref{sec:base2}, and Section~\ref{sec:concluding}, or the selection of the bibliography. The author remains fully responsible for the conceptualization, methodology, data generation and analysis, writing, and rigorous verification of the submitted manuscript.

\subsection*{Data availability}

The Python source code used for all computations in this paper, the original output files from which every table in Sections~\ref{sec:experiments} and~\ref{sec:base2} is drawn, a post-processing script for Good's serial statistic, and a README mapping each table to the corresponding output are openly available on Zenodo at DOI \href{https://doi.org/10.5281/zenodo.21268947}{10.5281/zenodo.21268947}. The computations require only a standard CPython~$\geq 3.11$ interpreter, with no third-party dependencies.



\begin{thebibliography}{69}

\bibitem{bailey-crandall}
Bailey, D. H., Crandall, R. E. (2003). Random generators and normal numbers. \textit{Exp. Math.} {11}(4): 527--546. DOI: 10.1080/10586458.2002.10504704.

\bibitem{benfield-manes}
Benfield, B., Manes, M. (2022). The Fibonacci sequence is normal base 10. Preprint arXiv:2202.08986~[math.NT]. DOI: 10.48550/arXiv.2202.08986.

\bibitem{besicovitch}
Besicovitch, A. S. (1935). The asymptotic distribution of the numerals in the decimal representation of the squares of the natural numbers. \textit{Math. Z.} {39}(1): 146--156. DOI: 10.1007/BF01201350.

\bibitem{borel}
Borel, E. (1909). Les probabilit{\'e}s d{\'e}nombrables et leurs applications arithm{\'e}\-ti\-ques. \textit{Rend. Circ. Mat. Palermo} {27}(1): 247--271.

\bibitem{bugeaud}
Bugeaud, Y. (2012). \textit{Distribution Modulo One and Diophantine Approximation}. Cambridge Tracts in Mathematics, Vol.~193. Cambridge, UK: Cambridge Univ. Press.

\bibitem{bumby}
Bumby, R. T. (1975). A distribution property for linear recurrence of the second order. \textit{Proc. Amer. Math. Soc.} {50}(1): 101--106. DOI: 10.1090/S0002-9939-1975-0369240-X.

\bibitem{champernowne}
Champernowne, D. G. (1933). The construction of decimals normal in the scale of ten. \textit{J. Lond. Math. Soc.} {8}(4): 254--260. DOI: 10.1112/jlms/s1-8.4.254.

\bibitem{clanin-rayman}
Clanin, J., Rayman, M. (2025). Finite state dimension and the Davenport--Erd\H{o}s theorem. Preprint arXiv:2506.02332~[cs.IT].
DOI: 10.48550/arXiv.2506.02332.

\bibitem{copeland-erdos}
Copeland, A. H., Erd\H{o}s, P. (1946). Note on normal numbers. \textit{Bull. Amer. Math. Soc.} {52}(10): 857--860.

\bibitem{davenport-erdos}
Davenport, H., Erd\H{o}s, P. (1952). Note on normal decimals. \textit{Canad. J. Math.} {4}: 58--63. DOI: 10.4153/CJM-1952-005-3.

\bibitem{diaconis}
Diaconis, P. (1977). The distribution of leading digits and uniform distribution mod $1$. \textit{Ann. Probab.} {5}(1): 72--81. DOI: 10.1214/aop/1176995891.

\bibitem{diaconis-fulman}
Diaconis, P., Fulman, J. (2009). Carries, shuffling, and an amazing matrix. \textit{Amer. Math. Monthly} {116}(9): 788--803. DOI: 10.4169/000298909X474864.

\bibitem{good-1953}
Good, I. J. (1953). The serial test for sampling numbers and other tests for randomness. \textit{Proc. Cambridge Philos. Soc.} {49}(2): 276--284. DOI: 10.1017/S030500410002836X.

\bibitem{jacobson}
Jacobson, E. T. (1992). Distribution of the Fibonacci numbers mod $2^{k}$. \textit{Fibonacci Quart.} {30}(3): 211--215. DOI: 10.1080/00150517.1992.12429344.

\bibitem{kuipers-niederreiter}
Kuipers, L., Niederreiter, H. (1974). \textit{Uniform Distribution of Sequences}. New York, NY: John Wiley \& Sons.

\bibitem{kuipers}
Kuipers, L., Shiue, J. (1972). A distribution property of the sequence of Fibonacci numbers. \textit{Fibonacci Quart.} {10}(4): 375--376.

\bibitem{levin}
Levin, M. (1999). On the discrepancy estimate of normal numbers. \textit{Acta Arith.} {88}(2): 99--111. DOI: 10.4064/aa-88-2-99-111.

\bibitem{normality-criterion}
Madritsch, M. G., Thuswaldner, J. M., Tichy, R. F. (2008). Normality of numbers generated by the values of entire functions. \textit{J. Number Theory} {128}(5): 1127--1145. DOI: 10.1016/j.jnt.2007.04.005.

\bibitem{nakai-shiokawa-1990}
Nakai, I., Shiokawa, I. (1990). A class of normal numbers. \textit{Japan. J. Math. (N.\,S.)} {16}(1): 17--29. DOI: 10.4099/math1924.16.17.

\bibitem{nakai-shiokawa-1992}
Nakai, I., Shiokawa, I. (1992). Discrepancy estimates for a class of normal numbers. \textit{Acta Arith.} {62}(3): 271--284. DOI: 10.4064/aa-62-3-271-284.

\bibitem{niederreiter}
Niederreiter, H. (1972). Distribution of Fibonacci numbers mod $5^{k}$. \textit{Fibonacci Quart.} {10}(4): 373--374.

\bibitem{pollack-vandehey}
Pollack, P., Vandehey, J. (2015). Besicovitch, bisection, and the normality of $0.(1)(4)(9)(16)(25)\dots$. \textit{Amer. Math. Monthly} {122}(8): 757--765. DOI: 10.4169/amer.math.monthly.122.8.757.

\bibitem{phi-sigma-lambda}
Pollack, P., Vandehey, J. (2015). Some normal numbers generated by arithmetic functions. \textit{Canad. Math. Bull.} {58}(1): 160--173. DOI: 10.4153/CMB-2014-047-2.

\bibitem{spilker}
Spilker, J. (2003). Die Ziffern der Fibonacci-Zahlen. \textit{Elem. Math.} {58}(1): 26--33. DOI: 10.1007/s000170300003.

\bibitem{stoneham}
Stoneham, R. G. (1970). A general arithmetic construction of transcendental non-Liouville normal numbers from rational functions. \textit{Acta Arith.} {16}(3): 239--254. DOI: 10.4064/aa-16-3-239-254.

\bibitem{wall}
Wall, D. D. (1960). Fibonacci series modulo $m$. \textit{Amer. Math. Monthly} {67}(6): 525--532. DOI: 10.2307/2309169.

\end{thebibliography}
\end{document}